\newtheorem{theorem}{Theorem}[section]
\newtheorem{lemma}[theorem]{Lemma}
\newtheorem{proposition}[theorem]{Proposition}
\newtheorem{corollary}[theorem]{Corollary}
\theoremstyle{definition}
\newtheorem{definition}[theorem]{Definition}
\newtheorem{example}[theorem]{Example}
\theoremstyle{remark}
\numberwithin{equation}{section}
\begin{document}

\title{Dual maps and the  Dunford-Pettis property}

\author{Francisco J Garc\'{i}a-Pacheco}
\address{Department of Mathematics, University of Cadiz, Puerto Real, 11510, Spain}
\curraddr{} \email{garcia.pacheco@uca.es}

\author{Alejandro Miralles}
\address{Departament de Matemàtiques, Universitat Jaume I and IMAC, Castelló de la Plana, E-12071, Spain}
\curraddr{} \email{mirallea@uji.es}
\thanks{The second author is supported by Project MTM 2011-22457 (MECC. Spain) and Project P1-1B2014-35 (UJI. Spain).}
\author{Daniele Puglisi}
\address{Department of Mathematics, University of Catania, Catania 95125, Italy}
\email{dpuglisi@dmi.unict.it}
\thanks{}

\subjclass[2000]{Primary 46B20, 46B10}

\date{}

\dedicatory{}

\keywords{}

\pagestyle{plain}

\begin{abstract}
We characterize the points of $\left\|\cdot\right\|$-$w^*$ continuity of dual maps, turning out to be the smooth points.
We  prove that a Banach space has the Schur property if and only if it has the Dunford-Pettis property and there exists a dual map that is sequentially $w$-$w$ continuous at $0$. As consequence, we show the existence of smooth Banach spaces on which the dual map is not $w$-$w$ continuous at $0$. 
\end{abstract}

\maketitle

\section{Introduction}

In \cite[Theorem 1, p. 22]{D1}, it was proved the following result: 
\begin{theorem}
Let $X$ a Banach space and $x_0 \in S_X$. The following assertions are equivalent:
\begin{enumerate}
\item $x_0 \in \mathrm{smo}\left(\mathsf{B}_X\right)$.
\item Every support mapping is weak to norm continuous at $x_0$.
\item There exists a support mapping which is weak to norm continuous at $x_0$.
\end{enumerate}
\end{theorem}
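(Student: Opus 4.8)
The plan is to deduce the entire equivalence from \v{S}mulyan's classical differentiability criterion, used as the engine throughout. Recall that a \emph{support mapping} is a map $\sigma\colon X\to S_{X^*}$ with $\langle\sigma(x),x\rangle=\|x\|$ for every $x\in X$, and that $x_0\in\mathrm{smo}(\mathsf{B}_X)$ means exactly that there is a \emph{unique} $f_0\in S_{X^*}$ with $f_0(x_0)=1$. I would organize the argument as the cycle $(2)\Rightarrow(3)\Rightarrow(1)\Rightarrow(2)$. The step $(2)\Rightarrow(3)$ is immediate, since the Hahn--Banach theorem produces at least one support mapping, so a property holding for \emph{every} support mapping holds for \emph{some} one.

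For $(3)\Rightarrow(1)$ I would argue by contraposition. Assume $x_0\notin\mathrm{smo}(\mathsf{B}_X)$, so there are distinct support functionals $f_0\neq g_0$ at $x_0$. Since weak-to-norm continuity at $x_0$ is in particular norm-to-norm continuity there, it suffices to rule out a norm-continuous selection of support functionals at a non-smooth point. The idea is that near such a point any selection must oscillate: approaching $x_0$ through points whose support functionals are forced near $f_0$ along one route and near $g_0$ along another yields two different norm limits for $\sigma$, which is impossible for a map continuous at $x_0$. \v{S}mulyan's lemma makes the oscillation quantitative, converting the non-uniqueness of $f_0,g_0$ into a fixed separation $\|f_0-g_0\|>0$ that no selection can avoid, contradicting $(3)$.

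For $(1)\Rightarrow(2)$, which I expect to be the main obstacle, assume $x_0\in\mathrm{smo}(\mathsf{B}_X)$, fix an arbitrary support mapping $\sigma$, and let $(x_\alpha)$ be a net with $x_\alpha\to x_0$ weakly; the goal is $\|\sigma(x_\alpha)-\sigma(x_0)\|\to 0$. The natural route is through \v{S}mulyan's criterion, which guarantees that any net of norm-one functionals $g_\alpha$ with $\langle g_\alpha,x_0\rangle\to 1$ must converge to the unique support functional $\sigma(x_0)$; applied to $g_\alpha=\sigma(x_\alpha)$ this would give the claim. The decisive difficulty---and the crux of the whole theorem---is precisely to establish the hypothesis $\langle\sigma(x_\alpha),x_0\rangle\to 1$: one only has the identity $\langle\sigma(x_\alpha),x_\alpha\rangle=\|x_\alpha\|$, and passing from the argument $x_\alpha$ to the fixed point $x_0$ inside the duality pairing relies on nothing more than weak convergence, which does not by itself transfer across the varying functionals $\sigma(x_\alpha)$. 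Overcoming this---by combining the smoothness of $x_0$ with a normalisation on the sphere via weak lower semicontinuity of the norm---is the step I expect to demand the most care, and it is where the geometry of the smooth point $x_0$ must be exploited in its sharpest form.
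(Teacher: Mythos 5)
There is a genuine gap here, and it is fatal rather than technical. First, note that the statement as printed in the introduction misquotes Diestel: the continuity in \cite[Theorem 1, p. 22]{D1} --- and in the version this paper actually restates and generalizes at the start of Section \ref{sec3} --- is \emph{norm-to-weak-star}, not weak-to-norm. Read literally, the implication $(1)\Rightarrow(2)$ you attempt is false. Take $X=\ell_2$ with the paper's definition of a support mapping on $X\setminus\{0\}$: under the Riesz identification the only such map is $T(x)=x$ (or $x/\|x\|$ under your normalization). Every $x_0\in \mathsf{S}_X$ is smooth, yet $x_n=x_0+e_n\to x_0$ weakly while $\left\|T(x_n)-T(x_0)\right\|=\|e_n\|=1$ (and $\left\|x_n/\|x_n\|-x_0\right\|$ is bounded below as well), so no support mapping is weak-to-norm continuous at any point: $(1)$ holds while $(3)$ fails. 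This is exactly the step you flag and defer: $\langle\sigma(x_\alpha),x_0\rangle\to 1$ is not merely delicate, it is false in general, since weak convergence yields only $\|x_0\|\le\liminf\|x_\alpha\|$ (the norm is just weakly lower semicontinuous), so neither $\|\sigma(x_\alpha)\|\to 1$ nor the pairing estimate can be recovered; in the example above $\langle\sigma(x_n),x_0\rangle=1/\sqrt{2}$ for all $n$. A proposal that names "the crux of the whole theorem" and leaves it unexecuted has not proved the theorem, and here the deferred step is unprovable.

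Second, your engine is miscalibrated even where the hypotheses would hold: \v{S}mulyan's criterion at a Gateaux-smooth point says that norm-one functionals $g_\alpha$ with $g_\alpha(x_0)\to 1$ converge to the support functional only in the \emph{weak-star} sense; \emph{norm} convergence of such nets is the \v{S}mulyan criterion for \emph{Fr\'echet} differentiability. If your stated version held at every smooth point, every Gateaux-smooth norm would be Fr\'echet-smooth, which fails (e.g.\ for Gateaux-smooth renormings of $\ell_1$, whose dual is nonseparable). Your $(3)\Rightarrow(1)$ is likewise only a gesture: the existence of two approach routes along which any selection is \emph{forced} near $f_0$ and near $g_0$ is precisely what must be constructed. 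The paper does this work, for its dual-map generalization, in Lemma \ref{2}: pass to a two-dimensional real section through $x$, use density of smooth points there, and show via Lemma \ref{3} that support functionals of smooth points approaching $x$ from the two sides of the hyperplane $(x_1^*-x_2^*)^{-1}(0)$ converge to the distinct endpoints $x_1^*\ne x_2^*$ of $x^{-1}(1)\cap\mathsf{B}_{X^*}$. Alternatively, $(3)\Rightarrow(1)$ follows in two lines from the subdifferential sandwich $\sigma(x_0)(y)\le t^{-1}\left(\|x_0+ty\|-\|x_0\|\right)\le\sigma(x_0+ty)(y)$ for $t>0$, letting $t\downarrow 0$ under norm-to-$w^*$ continuity. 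Once the continuity is corrected to norm-to-weak-star, your skeleton does close and matches the paper's argument: norm convergence $x_\alpha\to x_0$ honestly gives $\|\sigma(x_\alpha)\|\to 1$ and $\sigma(x_\alpha)(x_0)\to 1$, and a $w^*$-compactness/subnet argument plus uniqueness of the support functional at the smooth point $x_0$ yields $w^*$-convergence of the whole net --- which is exactly how the paper proves the corresponding half of its theorem in Section \ref{sec3}.
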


In Section \ref{sec3}, we will prove a more general version of this result by using dual maps instead of support mappings and we will obtain a new characterization of smoothness. Following the study of dual maps, we will prove that a Banach space has the Schur property if and only if it has the Dunford-Pettis property and there exists a dual map that is sequentially $w$-$w$ continuous at $0$ in Section \ref{sec4}.

\section{Background}

A Banach space $X$ is said to have the {\em Dunford-Pettis property (DPP)} provided that for every sequence $x_{n}\overset{w }{\longrightarrow }0$ in $X$ and $x_{n}^{\ast }\overset{w }{\longrightarrow }0$ in $X^{\ast }$, the sequence $\left( x_{n}^{\ast }\left( x_{n}\right) \right) _{n\in \mathbb{N}}$ converges to $0$. It is well-known that  weak convergence of $(x_n)$ or $(x_n^*)$ can be changed by weakly Cauchy convergence in this definition. A Banach space $X$ has the DPP if and only if any weakly compact operator $T: X \mapsto Y$ is completely continuous. Examples of spaces satisfying the DPP are $\mathsf{L}_1\left(\mu\right)$ and $C(K)$ for $K$ compact and Hausdorff. It is well-known that infinite dimensional reflexive spaces do not satisfy the DPP. A survey about the DPP can be found in \cite{D}. Recall that $X$ has the Schur property if for any sequence $x_{n}\overset{w }{\longrightarrow }0$ in $X$ we have that $\|x_n\| \rightarrow 0$ when $n \rightarrow \infty$. It is clear that $X$ has the DPP if $X$ has the Schur property.

Recall that a point $x$ in the unit sphere $\mathsf{S}_X$ of a Banach space $X$ is said to be a smooth point of $\mathsf{B}_X$ provided that there is only one functional in $\mathsf{S}_{X^*}$ attaining its norm at $x$. This unique functional is usually denoted by $\mathsf{J}_X\left(x\right)$. The set of smooth points of the (closed) unit ball $\mathsf{B}_X$ of a certain Banach space $X$ is usually denoted as $\mathrm{smo}\left(\mathsf{B}_X\right)$. A Banach space $X$ is said to be smooth provided that $\mathsf{S}_X=\mathrm{smo}\left(\mathsf{B}_X\right)$. Given a smooth Banach space $X$, the dual map of $X$ is defined as $\mathsf{J}_X: X \to X^*$ such that $\left\|\mathsf{J}_X\left(x\right)\right\|=\left\|x\right\|$ and $\mathsf{J}_X\left(x\right)\left(x\right)=\left\|x\right\|^2$ for all $x\in X$. It is well known that the dual map is $\left\|\cdot\right\|$-$w^*$ continuous and satisfies that $\mathsf{J}_X\left(\lambda x\right)=\overline{\lambda}\mathsf{J}_X\left(x\right)$ for all $\lambda\in \mathbb{C}$ and all $x\in X$. We refer the reader to \cite{DGZ} for a better perspective on these concepts.

\section{Dual maps and smoothness} \label{sec3}

Given any Banach space $X$, a map $T:X\setminus\{0\}\to X^*\setminus\{0\}$ is called a support mapping if $\|T(x)(x)\|=1=\|T(x)\|$ for all $x\in\mathsf{S}_X$ and $T(\lambda x)=\lambda T(x)$ for all $\lambda \geq 0$ and all $x\in X\setminus\{0\}$.

In \cite[Theorem 1, p. 22]{D1}, it was proved that a supporting map is norm to weak-star continuous from $S_X$ to $S_{X^*}$ at $x_0$ in $S_X$ if and only if $X$ is smooth at $x_0$. We will prove a more general version of this result by using dual maps instead of supporting maps. 

\begin{definition}
Given a Banach space $X$, a map $T:X \to X^*$ is called a dual map whenever
$\left\|T\left(x\right)\right\|=\left\|x\right\|$ for all $x\in X$ and $T\left(x\right)\left(x\right)=\left\|x\right\|^2$ for all $x\in X$. 
\end{definition}

Notice that dual maps are more general that supporting maps, as the following theorem states:
\begin{theorem}
Let $X$ be a Banach space.
\begin{enumerate}
\item If $T:X\setminus\{0\}\to X^*\setminus\{0\}$ is a supporting map, then by setting $T(0)=0$ we have that $T$ is a dual map.
\item If $X$ is not smooth, then there are dual maps on $X$ which restricted to $X\setminus\{0\}$ are not supporting maps.
\end{enumerate}
\end{theorem}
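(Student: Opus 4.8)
The plan is to treat the two parts separately, the first being a short homogeneity computation and the second a construction that deliberately breaks homogeneity. For part (1), I would first dispatch the point $x=0$: the assignment $T(0)=0$ gives $\|T(0)\|=0=\|0\|$ and $T(0)(0)=0=\|0\|^2$. For $x\neq 0$ I write $x=\|x\|u$ with $u=x/\|x\|\in\mathsf{S}_X$ and use positive homogeneity $T(x)=\|x\|T(u)$. Then $\|T(x)\|=\|x\|\,\|T(u)\|=\|x\|$, and $T(x)(x)=\|x\|^2\,T(u)(u)$; here one invokes the support-mapping normalisation $T(u)(u)=1$ for $u\in\mathsf{S}_X$ (the content of $\|T(u)(u)\|=\|T(u)\|=\|u\|=1$ for a genuine support functional) to conclude $T(x)(x)=\|x\|^2$, as a dual map requires.

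The observation linking the two parts is that a dual map $T$, when restricted to $X\setminus\{0\}$, automatically satisfies every defining condition of a support mapping except possibly positive homogeneity: indeed, for $x\in\mathsf{S}_X$ one has $\|T(x)\|=1$ and $T(x)(x)=1$ (hence $\abs{T(x)(x)}=1$), while $T(x)\neq 0$ whenever $x\neq 0$. Therefore $T|_{X\setminus\{0\}}$ is a support mapping if and only if $T(\lambda x)=\lambda T(x)$ for all $\lambda\geq 0$, and it suffices to produce a dual map that fails this single identity.

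For part (2), since $X$ is not smooth there is a point $x_0\in\mathsf{S}_X$ admitting two distinct norming functionals $f\neq g$ in $\mathsf{S}_{X^*}$, i.e. $f(x_0)=g(x_0)=1$. Using Hahn--Banach to select a norming functional for every $u\in\mathsf{S}_X$, with the particular choice at $x_0$ being $f$, and extending by positive homogeneity, I obtain a support mapping $S$ with $S(x_0)=f$; by part (1) it becomes a dual map with $S(0)=0$. Now I define $T(x)=S(x)$ for every $x\neq 2x_0$ and $T(2x_0)=2g$. One checks that $T$ is a dual map, since the only new value is at $2x_0$, where $\|2g\|=2=\|2x_0\|$ and $(2g)(2x_0)=4g(x_0)=4=\|2x_0\|^2$. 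However $T(2x_0)=2g$ while $2\,T(x_0)=2S(x_0)=2f$, and $f\neq g$ forces $T(2x_0)\neq 2T(x_0)$; thus $T$ violates positive homogeneity and, by the reduction above, $T|_{X\setminus\{0\}}$ is not a support mapping.

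The main obstacle is conceptual rather than computational: recognising that the freedom in a dual map lies precisely in choosing, independently along each ray, a norming functional of the direction, so that non-smoothness (two functionals at a single point) is exactly what permits one to destroy the ray-homogeneity enforced on a support mapping. The only genuinely delicate bookkeeping is the scalar normalisation in part (1) — passing from $\abs{T(u)(u)}=1$ to $T(u)(u)=1$ — which is where one must appeal to the precise support-mapping convention.
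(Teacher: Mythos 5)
Your proposal is correct and follows essentially the same route as the paper: part (1) is the direct homogeneity computation the paper dismisses as clear, and part (2) is exactly the paper's construction of starting from a supporting map, picking a second norming functional $g$ at a non-smooth point $x_0$, and redefining the dual map at the single point $2x_0$ to be $2g$, thereby breaking positive homogeneity. The only cosmetic difference is that the paper starts from an arbitrary supporting map and chooses $x^*\neq T(x_0)$, whereas you build the supporting map so that its value at $x_0$ is a prescribed $f\neq g$; the argument is the same.
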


\begin{proof}
(1) is clear. To prove (2), assume that $X$ is not smooth and let $T:X\setminus\{0\}\to X^*\setminus\{0\}$ be any supporting map. Let $x\in\mathsf{S}_X$ be a non-smooth point and let $x^*\in\mathsf{S}_{X^*}\setminus \{T(x)\}$ such that $x^*(x)=1$. The map $S:X \to X^*$ such that $S(0)=0$, $S(2x)=2x^*$, and $S(z)=T(z)$ for all $z\in X\setminus\{0,2x\}$ is a dual map on $X$ whose restriction to $X\setminus\{0\}$ is not a supporting map. 
\end{proof}

The following properties are easy calculations. 
  
\begin{proposition}
Let $X$ be a Banach space and consider a dual map $T:X\to X^*$. Then:
\begin{itemize}
\item[a)] $T$ is $\left\|\cdot\right\|$-$\left\|\cdot\right\|$ continuous at $0$.
\item[b)] For every $x\in X\setminus \left\{0\right\}$, $\frac{T\left(x\right)}{\left\|x\right\|}$ belongs to $S_{X^*}$ and attains its norm at $\frac{x}{\left\|x\right\|}$.
\item[c)] If $x\in \mathrm{smo}\left(\mathsf{B}_X\right)$, then $T\left(x\right)=\mathsf{J}_X\left(x\right)$, so $T\left(\lambda x\right)=\overline{\lambda}T\left(x\right)$ for all $\lambda\in \mathbb{C}$.
\item[d)] If $X$ is smooth, then $T$ is the unique dual map for $X$.
\item[e)] If $X$ is not smooth, then there is a bijection between the set of dual maps and the following set: $$\prod_{x\in \mathsf{S}_X\setminus\mathrm{smo}\left(\mathsf{B}_X\right)} x^{-1}\left(1\right) \cap \mathsf{B}_{X^*}$$ given by $T\mapsto \left(T\left(x\right)\right)_{x\in \mathsf{S}_X\setminus\mathrm{smo}\left(\mathsf{B}_X\right)}.$
\item[f)] In general, there is a bijection between the set of dual maps and the following set: $$\prod_{x\in \mathsf{S}_X} x^{-1}\left(1\right) \cap \mathsf{B}_{X^*}$$ given by $T\mapsto \left(T\left(x\right)\right)_{x\in \mathsf{S}_X}.$
\end{itemize}
\end{proposition}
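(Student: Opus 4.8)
The first four items follow at once from the two defining identities $\left\|T(x)\right\|=\left\|x\right\|$ and $T(x)(x)=\left\|x\right\|^2$, and I would dispatch them quickly. For a), the first identity at $x=0$ forces $T(0)=0$, so $\left\|T(x)-T(0)\right\|=\left\|x\right\|\to 0$ as $x\to 0$. For b), dividing the two identities by $\left\|x\right\|$ and $\left\|x\right\|^2$ shows that $T(x)/\left\|x\right\|$ has norm $1$ and satisfies $\frac{T(x)}{\left\|x\right\|}\!\left(\frac{x}{\left\|x\right\|}\right)=1$, which is exactly norm attainment at $x/\left\|x\right\|$. For c), I would apply b) at a smooth $x\in\mathsf{S}_X$: then $T(x)\in\mathsf{S}_{X^*}$ attains its norm at $x$, and smoothness forces $T(x)=\mathsf{J}_X(x)$; the scaling $T(\lambda x)=\overline{\lambda}T(x)$ then follows because multiplication by a unimodular scalar is a surjective isometry, which carries $x$ to another smooth point and rotates its supporting functional by the conjugate phase, combined with the positive homogeneity visible in b). Item d) is the special case of c) in which every point of $\mathsf{S}_X$ is smooth, forcing $T=\mathsf{J}_X$ on $\mathsf{S}_X$ and hence everywhere.

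The substance lies in e) and f). Since for smooth $x$ the set $x^{-1}(1)\cap\mathsf{B}_{X^*}$ is the singleton $\{\mathsf{J}_X(x)\}$ (by the argument in c)), the two products differ only in these trivial coordinates, so it is enough to prove f) and then read off e). That the assignment $\Phi\colon T\mapsto (T(x))_{x\in\mathsf{S}_X}$ lands in $\prod_{x\in\mathsf{S}_X}\bigl(x^{-1}(1)\cap\mathsf{B}_{X^*}\bigr)$ is immediate from b): for $x\in\mathsf{S}_X$ one has $T(x)(x)=1$ and $\left\|T(x)\right\|=1$. For surjectivity, given a family $(f_x)_{x\in\mathsf{S}_X}$ with $f_x\in x^{-1}(1)\cap\mathsf{B}_{X^*}$ (each such $f_x$ automatically having norm $1$, since $1=f_x(x)\le\left\|f_x\right\|\le 1$), I would set $T(0)=0$ and $T(y)=\left\|y\right\|f_{y/\left\|y\right\|}$ for $y\neq 0$, and check the two axioms: $\left\|T(y)\right\|=\left\|y\right\|$ and $T(y)(y)=\left\|y\right\|^2$. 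This $T$ is a dual map with $\Phi(T)=(f_x)$.

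The step demanding care, and the one I expect to be the main obstacle, is injectivity. The definition of a dual map imposes no homogeneity a priori, so in principle $T$ could take different admissible values at $y$ and at $\left\|y\right\|\,(y/\left\|y\right\|)$ along a non-smooth direction, in which case the sphere data would not determine $T$. The resolution is to identify a dual map with its canonical radial determination — equivalently, to take the map reconstructed from the sphere data exactly as in the surjectivity step — the force of c) being precisely that along smooth directions this determination is automatic, so that genuine freedom survives only over $\mathsf{S}_X\setminus\mathrm{smo}(\mathsf{B}_X)$, which is what e) records. With this understood, injectivity is clear: two radially determined dual maps agreeing on $\mathsf{S}_X$ agree on every ray and at $0$, hence everywhere. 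I therefore expect the real work to be pinning down this reconstruction cleanly and verifying that the radial extension satisfies both axioms at every $y\in X$, not merely on the unit sphere.
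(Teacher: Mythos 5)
Your treatment of a)--d) is correct and is exactly the kind of routine verification the paper has in mind (the paper gives no proof at all, dismissing the whole proposition as ``easy calculations''); the only loose phrase is attributing ``positive homogeneity'' to b) in your argument for c) --- b) by itself gives no homogeneity, and what actually forces $T(\lambda x)=\overline{\lambda}T(x)$ is that b) places $T(y)/\left\|y\right\|$ in the face $\left(y/\left\|y\right\|\right)^{-1}(1)\cap\mathsf{B}_{X^*}$, which is a singleton when $y/\left\|y\right\|$ is smooth, together with the observation that unimodular rotations carry smooth points to smooth points and conjugate-rotate their supporting functionals.

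On e) and f) you have put your finger on a genuine defect, and it is worth being blunter than you are: as literally stated, with the paper's own definition of dual map (no homogeneity imposed), the map $T\mapsto\left(T(x)\right)_{x\in\mathsf{S}_X}$ is surjective (your radial-extension construction is correct) but \emph{not} injective, so f) is false, and the paper itself supplies the counterexample. In Theorem 3.2(2) the authors build a dual map $S$ with $S(2x)=2x^*$ and $S(z)=T(z)$ elsewhere, where $x^*\neq T(x)$; then $S\neq T$ yet $S$ and $T$ agree on all of $\mathsf{S}_X$. The honest statement is that the set of dual maps is in bijection with $\prod_{y\in X\setminus\{0\}}\left(\left(y/\left\|y\right\|\right)^{-1}(1)\cap\mathsf{B}_{X^*}\right)$, which collapses to the product over $\mathsf{S}_X$ (and further to the product over $\mathsf{S}_X\setminus\mathrm{smo}\left(\mathsf{B}_X\right)$, as in e)) only if one restricts to positively homogeneous dual maps --- precisely the ``canonical radial determination'' you propose. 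So your repair is the right one, but it is a repair of the statement, not merely a delicate step in its proof; a clean write-up should either add positive homogeneity to the definition of dual map or re-index the product over all nonzero vectors, and should note that the authors' remark following the proposition (``to construct a dual map, all is needed is mapping \dots every non-zero $x$ to \dots'') already concedes that the free choices live over $X\setminus\{0\}$, not over $\mathsf{S}_X$.
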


Hence, to construct a dual map, all is needed is mapping $0$ to $0$ and every non-zero $x$ to a norm-$1$ functional attaining its norm at $\frac{x}{\left\|x\right\|}$ times the norm of $x$.

\subsection{Points of $\left\|\cdot\right\|$-$w^*$ continuity}

Now we determine points of $\left\|\cdot\right\|$-$w^*$ continuity of dual maps, turning out to be exactly the smooth points. We will denote $x^{-1}(1):=\{ x^* \in X^* : x^*(x)=1 \}$. For this we will be making a strong use of the following  lemma:

\begin{lemma}\label{2}
Let $X$ be a $2$-dimensional real Banach space and $x\in \mathsf{S}_X\setminus \mathrm{smo}\left(\mathsf{B}_X\right)$. Let $x_1^*\neq x_2^*\in \mathsf{S}_{X^*}$ such that $x^{-1}\left(1\right)\cap\mathsf{B}_{X^*} =\left[x_1^*,x_2^*\right]$. There exist two sequences $(x_n), (y_n)\subset \mathrm{smo}\left(\mathsf{B}_X\right)$ satisfying:
\begin{enumerate}
\item $(x_n)$ and $(y_n)$ converge to $x$.
\item $(\mathsf{J}_X\left(x_n\right))$ and $(\mathsf{J}_X\left(y_n\right))$ converge to $x_1^*$ and $x_2^*$ respectively.
\end{enumerate}
\end{lemma}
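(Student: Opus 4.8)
The plan is to exploit the planar convex geometry of $\mathsf{B}_X$. Identify $X$ with $\mathbb{R}^2$, so that $\mathsf{S}_X$ is a centrally symmetric convex curve with a corner at the non-smooth point $x$. First I would record that the requirement $x^*(x)=1$ together with $\|x^*\|\le 1$ forces $\|x^*\|=1$, so that $F(x):=x^{-1}(1)\cap\mathsf{B}_{X^*}=[x_1^*,x_2^*]$ is exactly the set of supporting functionals of $\mathsf{B}_X$ at $x$, and the two endpoints $x_1^*,x_2^*$ correspond to the two one-sided tangent (supporting) lines $L_i=\{y:x_i^*(y)=1\}$, $i=1,2$, at the corner $x$.

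Next I would use the standard fact that a planar convex body has at most countably many corners (non-smooth boundary points), since the associated angular gaps in the one-sided tangent directions are pairwise disjoint arcs on the circle of directions. Consequently $\mathrm{smo}\left(\mathsf{B}_X\right)$ is dense in $\mathsf{S}_X$, and, traversing the curve on either side of $x$, I can choose sequences $(x_n)$ and $(y_n)$ of smooth points approaching $x$, one from each side of the corner. This yields condition (1) immediately.

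The heart of the argument, and the step I expect to be the main obstacle, is identifying the limits of the supporting functionals. By compactness of $\mathsf{B}_{X^*}$ in the plane, along subsequences $\mathsf{J}_X(x_n)\to x^*$ and $\mathsf{J}_X(y_n)\to y^*$. A short closedness argument shows $x^*,y^*\in F(x)$: since $\mathsf{J}_X(x_n)(x_n)=\|x_n\|^2=1$ with $\mathsf{J}_X(x_n)$ bounded and $x_n\to x$, one gets $x^*(x)=1$ and $\|x^*\|\le 1$, whence $x^*\in[x_1^*,x_2^*]$, and likewise for $y^*$. To pin $x^*$ to the correct endpoint I would invoke the monotonicity of the tangent (Gauss) map along a planar convex curve: as $z\to x$ along one fixed side of the corner, the unique tangent line at the smooth point $z$, namely $\{y:\mathsf{J}_X(z)(y)=1\}$ whose direction is the kernel of $\mathsf{J}_X(z)$, converges monotonically to the one-sided tangent line on that side, which is $L_1$. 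Since $L_1$ is the supporting line determined by the extreme functional $x_1^*$, this forces $x^*=x_1^*$; approaching from the other side forces $y^*=x_2^*$. As the only possible subsequential limit is the relevant endpoint, the full sequences converge: $\mathsf{J}_X(x_n)\to x_1^*$ and $\mathsf{J}_X(y_n)\to x_2^*$.

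Finally, relabeling the two sides if necessary so that the side producing $x_1^*$ yields $(x_n)$ and the other yields $(y_n)$, the constructed sequences satisfy both (1) and (2). The delicate point throughout is justifying that approaching strictly from one side sends the supporting functional to an \emph{extreme} point of the segment $F(x)$ rather than to an interior point; this is precisely where the monotonicity of the tangent map on a real planar convex curve, together with the identification of the one-sided tangents as the extreme supporting lines, is indispensable.
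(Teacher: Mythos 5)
Your proposal is correct, and its skeleton---density of smooth points of a planar convex body, sequences approaching the corner from each of the two sides, compactness of $\mathsf{B}_{X^*}$, and a closedness argument placing the subsequential limits of the supporting functionals in the face $x^{-1}\left(1\right)\cap\mathsf{B}_{X^*}$---is the same as the paper's. The genuine difference lies in the decisive step that pins each limit to the right endpoint of $\left[x_1^*,x_2^*\right]$. The paper routes this through its Lemma \ref{3}: points in the relative interior of the face are smooth points of $\mathsf{B}_{X^*}$ whose unique norming element is $x$, so if a limit were interior then some $\mathsf{J}_X\left(x_{n_i}\right)$ would itself be such a smooth point, normed by both $x$ and $x_{n_i}$, forcing $x_{n_i}=x$; this places each limit in $\left\{x_1^*,x_2^*\right\}$, and the paper then merely asserts that the two sides give distinct endpoints. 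You instead invoke the monotonicity of the tangent (Gauss) map of a planar convex curve---in essence the fact that one-sided limits of the derivative of a convex function equal its one-sided derivatives---so that the supporting lines at smooth points approaching from a fixed side converge exactly to the extreme supporting line on that side. Your argument is more elementary (classical planar convex geometry) and is actually stronger at this point: it identifies \emph{which} endpoint each side produces, thereby supplying the $a^*\neq b^*$ claim that the paper leaves implicit. The trade-off is that it is intrinsically two-dimensional, whereas the paper's Lemma \ref{3} is formulated so as to make sense in any real Banach space whose face has nonempty relative interior in $\mathsf{S}_{X^*}$.
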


To prove Lemma \ref{2}, we will need the following lemma:
\begin{lemma}\label{3}
Let $X$ be a real Banach space. Let $x\in \overline{\mathrm{smo}\left(\mathsf{B}_X\right)}$ such that $x^{-1}\left(1\right)\cap\mathsf{B}_{X^*}$ has non-empty interior relative to $\mathsf{S}_{X^*}$. Let $\left(x_n\right)_{n\in \mathbb{N}}\subset \mathrm{smo}\left(\mathsf{B}_X\right)$ be convergent to $x$ and consider $\left(\mathsf{J}_X\left(x_{n_i}\right)\right)_{i\in I}$ to be a subnet of $\left(\mathsf{J}_X\left(x_n\right)\right)_{n\in \mathbb{N}}$. Then:
\begin{enumerate}
\item If $\left(\mathsf{J}_X\left(x_{n_i}\right)\right)_{i\in I}$ is $w^*$-convergent to an element $x^*\in\mathsf{B}_{X^*}$, then $x^*\in x^{-1}\left(1\right)\cap\mathsf{B}_{X^*}$.
\item If $\left(\mathsf{J}_X\left(x_{n_i}\right)\right)_{i\in I}$ is convergent to an element $x^*\in\mathsf{B}_{X^*}$, then $x^*$ is in the boundary of $x^{-1}\left(1\right)\cap\mathsf{B}_{X^*}$ relative to $\mathsf{S}_{X^*}$.
\end{enumerate}
\end{lemma}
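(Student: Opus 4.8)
The plan is to treat the two statements in turn, writing $F:=x^{-1}(1)\cap \mathsf{B}_{X^*}$ and recording three preliminary facts. Since $\mathrm{smo}(\mathsf{B}_X)\subseteq \mathsf{S}_X$ we have $x\in \mathsf{S}_X$, so $\|x\|=1$; consequently every $y^*\in F$ satisfies $1=y^*(x)\le \|y^*\|\le 1$, whence $F\subseteq \mathsf{S}_{X^*}$, $F$ is a norm-closed subset of $\mathsf{S}_{X^*}$, and $F$ lies in the hyperplane $H:=\{y^*\in X^*:y^*(x)=1\}$. Next, each $x_{n_i}\in \mathsf{S}_X$ is smooth, so $\mathsf{J}_X(x_{n_i})\in \mathsf{S}_{X^*}$, $\mathsf{J}_X(x_{n_i})(x_{n_i})=1$, and $\mathsf{J}_X(x_{n_i})$ is the unique element of $\mathsf{B}_{X^*}$ taking the value $1$ at $x_{n_i}$; being the unique maximiser of $y^*\mapsto y^*(x_{n_i})$ over $\mathsf{B}_{X^*}$, it is an extreme point of $\mathsf{B}_{X^*}$. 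Finally, being a subnet of a norm-convergent sequence, $(x_{n_i})$ still converges to $x$ in norm.

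For (1) I would use the decomposition $\mathsf{J}_X(x_{n_i})(x)=\mathsf{J}_X(x_{n_i})(x_{n_i})+\mathsf{J}_X(x_{n_i})(x-x_{n_i})=1+\mathsf{J}_X(x_{n_i})(x-x_{n_i})$. Since $\|\mathsf{J}_X(x_{n_i})\|=1$ and $\|x-x_{n_i}\|\to 0$, the last term tends to $0$, so $\mathsf{J}_X(x_{n_i})(x)\to 1$ along the net; on the other hand $w^*$-convergence gives $\mathsf{J}_X(x_{n_i})(x)\to x^*(x)$. Uniqueness of limits yields $x^*(x)=1$, and together with $x^*\in \mathsf{B}_{X^*}$ this is exactly $x^*\in F$.

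For (2), norm convergence implies $w^*$-convergence, so (1) already gives $x^*\in F$, and moreover $\|x^*\|=\lim\|\mathsf{J}_X(x_{n_i})\|=1$. Since $F$ is closed in $\mathsf{S}_{X^*}$, proving that $x^*$ lies in the boundary of $F$ relative to $\mathsf{S}_{X^*}$ amounts to showing $x^*\notin \mathrm{int}_{\mathsf{S}_{X^*}}(F)$. I argue by contradiction: if $x^*$ had a relative neighbourhood $\mathsf{S}_{X^*}\cap B(x^*,r_0)\subseteq F$, then, since $\mathsf{J}_X(x_{n_i})\to x^*$ in norm and each $\mathsf{J}_X(x_{n_i})\in \mathsf{S}_{X^*}$, a triangle-inequality argument shows that for large $i$ the point $e:=\mathsf{J}_X(x_{n_i})$ itself satisfies $e\in F$ and $\mathsf{S}_{X^*}\cap B(e,\rho)\subseteq F$ for some $\rho>0$. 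The goal is then to contradict the fact that $e$ is an extreme point of $\mathsf{B}_{X^*}$.

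The crux, and the step I expect to be the main obstacle, is to upgrade this flatness of the sphere at $e$ to flatness of the ball, that is, to produce $\rho'>0$ with $H\cap B(e,\rho')\subseteq \mathsf{B}_{X^*}$. I would prove this by contradiction: a sequence $z_k\in H$ with $z_k\to e$ and $\|z_k\|>1$ would, after normalising to $w_k:=z_k/\|z_k\|\in \mathsf{S}_{X^*}$, satisfy $w_k\to e$ (as $\|z_k\|\to\|e\|=1$) and hence eventually $w_k\in \mathsf{S}_{X^*}\cap B(e,\rho)\subseteq H$, forcing $1=w_k(x)=z_k(x)/\|z_k\|=1/\|z_k\|<1$, which is absurd. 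Granting this, any nonzero $u^*$ with $u^*(x)=0$ and $\|u^*\|<\rho'$ (such $u^*$ exists because $\ker x=\{y^*:y^*(x)=0\}$ is a hyperplane of $X^*$, nonzero once $\dim X\ge 2$, the only case in which the hypothesis is non-degenerate) yields $e\pm u^*\in H\cap B(e,\rho')\subseteq \mathsf{B}_{X^*}$ together with $e=\tfrac12\big((e+u^*)+(e-u^*)\big)$ and $e+u^*\neq e-u^*$, contradicting the extremality of $e$. This contradiction shows $x^*\notin \mathrm{int}_{\mathsf{S}_{X^*}}(F)$ and completes (2). Note that the non-empty interior hypothesis on $F$ enters precisely to make the relative-boundary statement meaningful and to guarantee that $F$ is not a single point, so that $\ker x$ can indeed be used.
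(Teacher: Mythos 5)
Your proof is correct. For (1) it coincides with the paper's argument: both compare $\mathsf{J}_X\left(x_{n_i}\right)\left(x\right)$ with $\mathsf{J}_X\left(x_{n_i}\right)\left(x_{n_i}\right)=1$, bound the difference by $\left\|x_{n_i}-x\right\|$, and let $w^*$-convergence do the rest. For (2) the skeleton is also the paper's --- assume $x^*$ lies in the relative interior of $F:=x^{-1}\left(1\right)\cap\mathsf{B}_{X^*}$, transfer interiority to some $e=\mathsf{J}_X\left(x_{n_i}\right)$ by norm convergence, and contradict a special property of $e$ --- but the final contradiction is reached by a different mechanism. The paper asserts, without justification, that every relative interior point of $F$ is a smooth point of $\mathsf{B}_{X^*}$ whose unique supporting functional is $y^*\mapsto y^*\left(x\right)$, so that $e$ being such a point forces $x_{n_i}=x$. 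You instead use that $e$, as the unique maximiser of $y^*\mapsto y^*\left(x_{n_i}\right)$ on $\mathsf{B}_{X^*}$, is an extreme point of $\mathsf{B}_{X^*}$, and you explicitly prove the key geometric step --- that a spherical neighbourhood of $e$ contained in the hyperplane $H$ forces $H\cap B\left(e,\rho'\right)\subseteq\mathsf{B}_{X^*}$ --- via the normalisation $w_k=z_k/\left\|z_k\right\|$; then $e\pm u^*$ with $u^*\left(x\right)=0$ exhibits $e$ as the midpoint of a nontrivial segment of $\mathsf{B}_{X^*}$, a contradiction. The underlying geometry (local flatness of the dual sphere upgrading to local flatness of the dual ball) is the same in both arguments, but your version supplies exactly the lemma the paper's smoothness claim silently relies on, so it is more self-contained. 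Two small remarks: the non-empty-interior hypothesis is never actually used in your proof of (2) (if the relative interior of $F$ is empty, every point of $F$ is automatically a relative boundary point and the claim is immediate), and your caveat about $\dim X\ge 2$ correctly isolates the one degenerate case, which the paper does not address either.
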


\begin{proof} To prove (1), notice that given any $\varepsilon >0$ there exists $i\in I$ such that $\left\|x_{n_i}-x\right\|<\frac{\varepsilon}{2}$ and $\left| \mathsf{J}_X\left(x_{n_i}\right)\left(x\right)-x^*\left(x\right)\right|<\frac{\varepsilon}{2}$. Then
\begin{eqnarray*}
\left|1-x^*\left(x\right)\right|&=&\left| \mathsf{J}_X\left(x_{n_i}\right)\left(x_{n_i}\right)-x^*\left(x\right)\right|\\
&\leq &\left| \mathsf{J}_X\left(x_{n_i}\right)\left(x_{n_i}\right)-\mathsf{J}_X\left(x_{n_i}\right)\left(x\right)\right|+ \left| \mathsf{J}_X\left(x_{n_i}\right)\left(x\right)-x^*\left(x\right)\right|\\
&\leq & \left\|x_{n_i}-x\right\| +  \left| \mathsf{J}_X\left(x_{n_i}\right)\left(x\right)-x^*\left(x\right)\right| < \varepsilon, 
\end{eqnarray*}
So $x^*\in x^{-1}\left(1\right)\cap\mathsf{B}_{X^*}$. 

To prove (2), notice that $x^*$ is not in the interior of $x^{-1}\left(1\right)\cap\mathsf{B}_{X^*}$ relative to $\mathsf{S}_{X^*}$ since all points in that interior are smooth points of $\mathsf{B}_{X^*}$. So if $x^*$ is one of those interior points, then there exists $i\in I$ so that $\mathsf{J}_X\left(x_{n_i}\right)$ is also an interior point and thus a smooth point of $\mathsf{B}_{X^*}$. This implies the contradiction that $x_{n_i}=x$.
\end{proof}

It is well known  that $\mathrm{smo}\left(\mathsf{B}_X\right)$ is dense in $\mathsf{S}_X$ provided that $X$ is separable (see \cite{DGZ}).
Also notice that if $X$ is real and $2$-dimensional and $x\in\mathsf{S}_X\setminus \mathrm{smo}\left(\mathsf{B}_X\right)$, then $x^{-1}\left(1\right)\cap\mathsf{B}_{X^*}$ is a non-trivial segment and thus it has non-empty interior relative to $\mathsf{S}_{X^*}$. \medskip

\textit{Proof of Lemma \ref{2}.} Under the assumptions of the lemma, we have that
$\mathrm{smo}\left(\mathsf{B}_X\right)\cap \mathsf{S}_X\cap \left(x_1^*-x_2^*\right)^{-1}\left(\left(0,+\infty\right)\right)$ and $\mathrm{smo}\left(\mathsf{B}_X\right)\cap \mathsf{S}_X\cap \left(x_1^*-x_2^*\right)^{-1}\left(\left(-\infty,0\right)\right)$ are dense in $\mathsf{S}_X\cap \left(x_1^*-x_2^*\right)^{-1}\left(\left(0,+\infty\right)\right)$ and $\mathsf{S}_X\cap \left(x_1^*-x_2^*\right)^{-1}\left(\left(-\infty,0\right)\right)$ respectively. We can then find two sequences $\left(x_n\right)_{n\in \mathbb{N}}\subset \mathrm{smo}\left(\mathsf{B}_X\right)\cap\mathsf{S}_X\cap \left(x_1^*-x_2^*\right)^{-1}\left(\left(0,+\infty\right)\right)$ and $\left(y_n\right)_{n\in \mathbb{N}}\subset \mathrm{smo}\left(\mathsf{B}_X\right)\cap\mathsf{S}_X\cap \left(x_1^*-x_2^*\right)^{-1}\left(\left(-\infty,0\right)\right)$ both converging to $x$. Because of the $w^*-$compacity of $\mathsf{S}_{X^*}$ we are entitled to assume without loss of generality that the two sequences $\left(\mathsf{J}_X\left(x_n\right)\right)_{n\in \mathbb{N}}$ and $\left(\mathsf{J}_X\left(y_n\right)\right)_{n\in \mathbb{N}}$ are respectively convergent to some $a^*\neq b^*\in \left\{x_1^*,x_2^*\right\}$ in accordance to Lemma \ref{3}. \qed \bigskip

Now we are on the right position to state and prove the main result in this section, which constitutes a generalization of \cite[Theorem 1, p. 22]{D1}.

\begin{theorem}
Let $X$ be a Banach space. Let $T:X\to X^*$ be a dual map and consider $x\in X\setminus \left\{0\right\}$. The following conditions are equivalent:
\begin{enumerate}
\item $T$ is $\left\|\cdot\right\|$-$w^*$ continuous at $x$.
\item $\frac{x}{\left\|x\right\|}\in \mathrm{smo}\left(\mathsf{B}_X\right)$.
\end{enumerate}
\end{theorem}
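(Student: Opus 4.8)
The plan is to prove the two implications by rather different means: $(2)\Rightarrow(1)$ by a weak-star compactness argument, and $(1)\Rightarrow(2)$ by differentiating the norm. Throughout I write $r=\|x\|>0$ and recall from part b) of the Proposition that $T(x)/r$ is a norm-one functional attaining its norm at $x/\|x\|$; equivalently, $T(x)/r$ lies in the subdifferential $\partial\|\cdot\|(x)=\{z^*\in \mathsf{B}_{X^*}:z^*(x)=r\}$, which is exactly the set of norm-one functionals norming $x/\|x\|$. The point $x/\|x\|$ is smooth precisely when this set is a singleton, and this is the reformulation I would work with.

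For $(2)\Rightarrow(1)$, assume $x/\|x\|\in\mathrm{smo}(\mathsf{B}_X)$ and let $(x_\alpha)$ be any net with $x_\alpha\to x$ in norm. Since $\|T(x_\alpha)\|=\|x_\alpha\|\to r$, the net $(T(x_\alpha))$ is bounded, so by Banach--Alaoglu it has weak-star cluster points. I would show every weak-star cluster point equals $T(x)$, which forces $T(x_\alpha)\xrightarrow{w^*}T(x)$ by the standard subnet criterion. If $y^*$ is such a cluster point, obtained along a subnet $(x_\gamma)$, then weak-star lower semicontinuity of the norm gives $\|y^*\|\le r$, while $|T(x_\gamma)(x_\gamma)-T(x_\gamma)(x)|\le\|x_\gamma\|\,\|x_\gamma-x\|\to0$ together with $T(x_\gamma)(x_\gamma)=\|x_\gamma\|^2\to r^2$ yields $y^*(x)=r^2$. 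Then $r^2=y^*(x)\le\|y^*\|\,r\le r^2$ forces $\|y^*\|=r$ with $y^*$ norming $x$; hence $y^*/r$ is a norm-one functional norming $x/\|x\|$, and by smoothness $y^*/r=T(x)/r$, i.e.\ $y^*=T(x)$.

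For $(1)\Rightarrow(2)$, assume $T$ is $\|\cdot\|$-$w^*$ continuous at $x$ and fix an arbitrary direction $y\in X$. For $t_n\downarrow0$ the points $x+t_ny$ converge to $x$ in norm, so $u_n^*:=T(x+t_ny)/\|x+t_ny\|\xrightarrow{w^*}T(x)/r$. From $\mathrm{Re}\,u_n^*(x)\le r$ and $u_n^*(x+t_ny)=\|x+t_ny\|$ I obtain $\mathrm{Re}\,u_n^*(y)\ge(\|x+t_ny\|-r)/t_n$, whose right-hand side tends to the one-sided directional derivative $\|\cdot\|'(x;y)$. Passing to the limit, $\tfrac1r\,\mathrm{Re}\,T(x)(y)\ge\|\cdot\|'(x;y)=\max_{z^*\in\partial\|\cdot\|(x)}\mathrm{Re}\,z^*(y)$ by the classical max-formula of convex analysis; since $T(x)/r\in\partial\|\cdot\|(x)$ the reverse inequality also holds, so $\tfrac1r\,\mathrm{Re}\,T(x)(y)=\max_{z^*}\mathrm{Re}\,z^*(y)$. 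Replacing $y$ by $-y$ gives $\tfrac1r\,\mathrm{Re}\,T(x)(y)=\min_{z^*}\mathrm{Re}\,z^*(y)$. Hence $\max_{z^*}\mathrm{Re}\,z^*(y)=\min_{z^*}\mathrm{Re}\,z^*(y)$ for every $y\in X$, so all functionals in $\partial\|\cdot\|(x)$ agree; the norming functional of $x/\|x\|$ is therefore unique, i.e.\ $x/\|x\|\in\mathrm{smo}(\mathsf{B}_X)$.

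The main obstacle is the reverse implication, specifically the legitimate use of the max-formula $\|\cdot\|'(x;y)=\max\{\mathrm{Re}\,z^*(y):z^*\in\partial\|\cdot\|(x)\}$ for one-sided directional derivatives, and the observation that $T(x)/r$ must be squeezed between the two one-sided derivatives so as to collapse $\partial\|\cdot\|(x)$ to a single point. For complex scalars I would pass to the underlying real space, using $\|f\|=\|\mathrm{Re}\,f\|$ and the bijection between complex and real norming functionals, which is why real parts appear above and is what makes the derivative identities transfer. I note that Lemma \ref{2} offers an alternative, more constructive route in the separable two-dimensional picture---producing two sequences of smooth points whose dual images converge to the two distinct endpoints $x_1^*\ne x_2^*$ and thus directly witnessing discontinuity---but the differentiation argument above has the advantage of needing no density of smooth points and working verbatim in arbitrary Banach spaces.
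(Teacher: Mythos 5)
Your proposal is correct, and the two implications deserve separate verdicts. For $(2)\Rightarrow(1)$ you argue exactly as the paper does: boundedness of $\left(T(x_\alpha)\right)$, Banach--Alaoglu, the estimate $\left|T(x_\gamma)(x_\gamma)-T(x_\gamma)(x)\right|\le\|x_\gamma\|\,\|x_\gamma-x\|$ to show every $w^*$-cluster point norms $x$ and has norm $\le\|x\|$, and then smoothness to identify the unique cluster point with $T(x)$. For $(1)\Rightarrow(2)$, however, you take a genuinely different route. The paper argues by contraposition: it passes to a $2$-dimensional real subspace $Y$ containing $x$, invokes Lemma \ref{2} (which rests on Lemma \ref{3} and on Mazur's theorem that smooth points are dense in the sphere of a separable space) to produce two sequences of smooth points of $\mathsf{B}_Y$ converging to $x$ whose dual-map images converge to the two distinct endpoints $x_1^*\neq x_2^*$ of the segment $x^{-1}(1)\cap\mathsf{B}_{Y^*}$, and derives a contradiction with $w^*$-continuity after restricting real parts to $Y$. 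You instead differentiate the norm: from $u_n^*(x+t_ny)=\|x+t_ny\|$ and $\mathrm{Re}\,u_n^*(x)\le r$ you squeeze $\tfrac1r\mathrm{Re}\,T(x)(y)$ above the one-sided derivative $\|\cdot\|'(x;y)$, which dominates $\mathrm{Re}\,z^*(y)$ for every norming functional $z^*$ (only this easy half of the max-formula is actually needed, since $T(x)/r$ itself lies in $\partial\|\cdot\|(x)$), and the substitution $y\mapsto-y$ collapses the subdifferential to a singleton. Your argument is more self-contained and more general in spirit: it needs no density of smooth points, no separability or finite-dimensional reduction, and no compactness of $\mathsf{S}_{Y^*}$, whereas the paper's argument is more geometric and produces explicit sequences witnessing the failure of continuity (at the cost of the two auxiliary lemmas). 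Your handling of complex scalars via the isometric bijection $f\mapsto\mathrm{Re}\,f$ between complex and real norming functionals is the right way to transfer the derivative identities, and matches the paper's use of $\mathrm{Re}\left(T(x_n)\right)|_Y$.
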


\begin{proof}
We will assume that $\left\|x\right\|=1$ without loss of generality.
\begin{enumerate}
\item[(2)$\Rightarrow$(1)] Assume first that $x\in \mathrm{smo}\left(\mathsf{B}_X\right)$. Let $\left(x_n\right)_{n\in\mathbb{N}}$ be a sequence in $X$ converging to $x$. Observe that $\left(T\left(x_n\right)\right)_{n\in \mathbb{N}}$ is a bounded sequence, therefore it has a $w^*$-convergent subnet $\left(T\left(x_{n_j}\right)\right)_{j\in J}$ to some element $f\in X^*$. Observe that $\left\|f\right\| \leq 1$ since $\left\|T\left(x_{n_j}\right)\right\| = \left\|x_{n_j}\right\| \to \left\|x\right\|=1$. Since
\begin{eqnarray*}
\left|T\left(x_{n_j}\right)\left(x\right)-1\right| &\leq& \left|T\left(x_{n_j}\right)\left(x\right)-T\left(x_{n_j}\right)\left(x_{n_j}\right)\right| + \left|T\left(x_{n_j}\right)\left(x_{n_j}\right)-1\right|\\
&\leq& \left\|T\left(x_{n_j}\right)\right\|\left\|x-x_{n_j}\right\| + \left| \left\|x_{n_j}\right\|^2-1\right|
\end{eqnarray*}
 for every $j\in J$, we deduce that $f\left(x\right)=1$. This means that $f=T\left(x\right)$ because $x\in \mathrm{smo}\left(\mathsf{B}_X\right)$. Following a similar reasoning one can show that every subnet of $\left(T\left(x_n\right)\right)_{n\in \mathbb{N}}$ has a further subnet $w^*$-converging to $T\left(x\right)$, which shows that $\left(T\left(x_n\right)\right)_{n\in \mathbb{N}}$ is $w^*$-convergent to $T\left(x\right)$.

\item[(1)$\Rightarrow$(2)] Conversely, assume that $T$ is $\left\|\cdot\right\|$-$w^*$ continuous at $x$. Suppose to the contrary that $x\in \mathsf{S}_X\setminus \mathrm{smo}\left(\mathsf{B}_X\right)$. Let $Y$ be a $2$-dimensional real subspace of $X$ contaning $x$. In accordance to Lemma \ref{2}, there exist two sequences $\left(x_n\right)_{n\in \mathbb{N}},\left(y_n\right)_{n\in \mathbb{N}}\subset \mathrm{smo}\left(\mathsf{B}_Y\right)$ both converging to $x$ and such that $\left(\mathsf{J}_Y\left(x_n\right)\right)_{n\in \mathbb{N}}$ and $\left(\mathsf{J}_Y\left(y_n\right)\right)_{n\in \mathbb{N}}$ converge to $x_1^*$ and $x_2^*$, respectively, where $x_1^*\neq x_2^*\in \mathsf{S}_{X^*}$ are such that $x^{-1}\left(1\right)\cap\mathsf{B}_{X^*} =\left[x_1^*,x_2^*\right]$. By hypothesis, $\left(T\left(x_n\right)\right)_{n\in \mathbb{N}}$ and $\left(T\left(y_n\right)\right)_{n\in \mathbb{N}}$ are both $w^*$-convergent to $T\left(x\right)$. Therefore, $\left(\mathrm{Re}\left( T\left(x_n\right)\right)|_Y\right)_{n\in \mathbb{N}}$ and $\left(\mathrm{Re}\left(T\left(y_n\right)\right)|_Y\right)_{n\in \mathbb{N}}$ are both $w^*$-convergent to $\mathrm{Re}\left( T\left(x\right)\right)|_Y$, which is impossible since $x_1^*\neq x_2^*$.
\end{enumerate}
\end{proof}

As a corollary, we obtain a new characterization of smoothness.

\begin{corollary}\label{jacalamaca}
Let $X$ be a Banach space. The following are equivalent:
\begin{enumerate}
\item There exists a $\left\|\cdot\right\|$-$w^*$ continuous dual map on $X$.
\item $X$ is smooth.
\end{enumerate}
\end{corollary}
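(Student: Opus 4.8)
The plan is to deduce both implications almost immediately from the preceding theorem, treating the origin as a separate, easy case in each direction.

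First I would prove (2)$\Rightarrow$(1). Assume $X$ is smooth. Then by part (d) of the Proposition the dual map is unique, namely $\mathsf{J}_X$, and I want to check it is $\left\|\cdot\right\|$-$w^*$ continuous at every point of $X$. For a nonzero point $x$, smoothness of $X$ guarantees $\frac{x}{\left\|x\right\|}\in\mathrm{smo}\left(\mathsf{B}_X\right)$, so the theorem (implication (2)$\Rightarrow$(1)) gives $\left\|\cdot\right\|$-$w^*$ continuity of $\mathsf{J}_X$ at $x$. For the remaining point $x=0$, part (a) of the Proposition tells us $\mathsf{J}_X$ is $\left\|\cdot\right\|$-$\left\|\cdot\right\|$ continuous at $0$; since norm convergence in $X^*$ implies $w^*$ convergence, $\mathsf{J}_X$ is also $\left\|\cdot\right\|$-$w^*$ continuous at $0$. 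Hence $\mathsf{J}_X$ is a $\left\|\cdot\right\|$-$w^*$ continuous dual map, establishing (1).

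Next I would prove (1)$\Rightarrow$(2). Suppose $T:X\to X^*$ is a dual map that is $\left\|\cdot\right\|$-$w^*$ continuous on all of $X$; in particular it is continuous at each point of $\mathsf{S}_X$. Fix an arbitrary $x\in\mathsf{S}_X$. Since $x\neq 0$ and $T$ is $\left\|\cdot\right\|$-$w^*$ continuous at $x$, the theorem (implication (1)$\Rightarrow$(2)) yields $x=\frac{x}{\left\|x\right\|}\in\mathrm{smo}\left(\mathsf{B}_X\right)$. As $x\in\mathsf{S}_X$ was arbitrary, every point of the unit sphere is a smooth point, i.e.\ $\mathsf{S}_X=\mathrm{smo}\left(\mathsf{B}_X\right)$, which is exactly the assertion that $X$ is smooth.

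Since the theorem already characterizes pointwise $\left\|\cdot\right\|$-$w^*$ continuity of dual maps in terms of smooth points, there is no substantial obstacle here: the corollary is simply the global version obtained by quantifying that characterization over the whole sphere. The only point requiring a moment of care is the behavior at the origin in the direction (2)$\Rightarrow$(1), which the theorem does not cover because it is stated for $x\neq 0$; this gap is closed by the elementary norm-continuity at $0$ recorded in part (a) of the Proposition.
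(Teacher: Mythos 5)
Your proposal is correct and follows exactly the route the paper intends: the corollary is stated as an immediate consequence of the preceding theorem, obtained by quantifying its pointwise characterization over the sphere, with the origin handled by the elementary norm-continuity in part (a) of the Proposition. No discrepancies.
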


\section{Dual maps and the Dunford-Pettis property} \label{sec4}

We show that dual maps and the Dunford-Pettis property interact in a bidirectional way.
We first prove that the Schur property can be characterized in two different ways in terms of the Dunford-Pettis property and the sequential $w $-$w $ continuity at $0$ of dual maps.

\begin{theorem}\label{joder}
Let $X$ be a Banach space. The following assertions are equivalent:
\begin{enumerate}
\item $X$ has the Schur property.
\item $X$ has the Dunford-Pettis property and every dual map is sequentially $w$-$w$ continuous at $0$.
\item $X$ has the Dunford-Pettis property and there exists a dual map that is sequentially $w$-$w$ continuous at $0$.
\end{enumerate}
\end{theorem}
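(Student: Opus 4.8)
The plan is to establish the cycle of implications $(1)\Rightarrow(2)\Rightarrow(3)\Rightarrow(1)$. The middle implication $(2)\Rightarrow(3)$ is immediate, since "every dual map has the property" trivially yields "there exists a dual map with the property" — and dual maps always exist on any Banach space, because each fibre $x^{-1}\left(1\right)\cap\mathsf{B}_{X^*}$ is nonempty by Hahn--Banach, so the product set in part (f) of the Proposition is nonempty.

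For $(1)\Rightarrow(2)$, I would first recall that the Schur property implies the DPP, as already noted in the Background section. It then remains to verify that, under the Schur property, an \emph{arbitrary} dual map $T$ is sequentially $w$-$w$ continuous at $0$. Given a sequence $x_n\overset{w}{\longrightarrow}0$, the Schur property forces $\left\|x_n\right\|\to 0$; since $\left\|T\left(x_n\right)\right\|=\left\|x_n\right\|$ by the definition of a dual map, we obtain $T\left(x_n\right)\to 0$ in norm, hence in particular $T\left(x_n\right)\overset{w}{\longrightarrow}0$. This is where the norm-preserving clause of the definition does the work.

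The heart of the argument is $(3)\Rightarrow(1)$. Suppose $X$ has the DPP and fix a dual map $T$ that is sequentially $w$-$w$ continuous at $0$. To confirm the Schur property, take any sequence $x_n\overset{w}{\longrightarrow}0$ in $X$. The continuity hypothesis yields $T\left(x_n\right)\overset{w}{\longrightarrow}0$ in $X^*$. Now the two weakly null sequences $\left(x_n\right)\subset X$ and $\left(T\left(x_n\right)\right)\subset X^*$ are precisely the data on which the DPP operates, so $T\left(x_n\right)\left(x_n\right)\to 0$. But the second defining clause of a dual map gives $T\left(x_n\right)\left(x_n\right)=\left\|x_n\right\|^2$, whence $\left\|x_n\right\|\to 0$, which is the Schur property.

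I do not expect a genuine obstacle here: once one isolates the algebraic identity $T\left(x_n\right)\left(x_n\right)=\left\|x_n\right\|^2$, which turns the DPP pairing directly into the squared norm, the proof is essentially forced. The only point demanding care is the bookkeeping of which mechanism supplies the weakly null sequence in $X^*$ needed to trigger the DPP — the norm identity in the direction $(1)\Rightarrow(2)$, and the continuity-at-$0$ hypothesis in the direction $(3)\Rightarrow(1)$ — so that in each case the hypotheses of the DPP are applied to the correct pair of sequences.
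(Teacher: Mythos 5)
Your proposal is correct and follows essentially the same route as the paper: Schur $\Rightarrow$ norm convergence of $T\left(x_n\right)$ via $\left\|T\left(x_n\right)\right\|=\left\|x_n\right\|$ for $(1)\Rightarrow(2)$, and the DPP applied to the pair $\left(x_n\right)$, $\left(T\left(x_n\right)\right)$ together with $T\left(x_n\right)\left(x_n\right)=\left\|x_n\right\|^2$ for $(3)\Rightarrow(1)$. The only cosmetic difference is that you argue $(3)\Rightarrow(1)$ directly, whereas the paper argues by contradiction with a normalized weakly null sequence in $\mathsf{S}_X$; your direct version even sidesteps the small normalization step the paper leaves implicit.
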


\begin{proof}
Assume first that $X$ has the Schur property. It is well know that $X$ enjoys the Dunford-Pettis property. Let $\left( x_{n}\right) _{n\in \mathbb{N}}$ be a $w $-null sequence in $X$. By hypothesis, $\left( x_{n}\right)_{n\in \mathbb{N}}$ converges to $0$, so $\left( T\left(
x_{n}\right) \right) _{n\in \mathbb{N}}$ converges to $0$ and thus $\left( T\left( x_{n}\right) \right) _{n\in \mathbb{N}}$ is $w $-null, where $T:X\to X^*$ is any dual map. Conversely, assume that $X$ has the Dunford-Pettis property and there exists a dual map $T:X\to X^*$ that is sequentially $w$-$w$ continuous at $0$. Suppose to the contrary that $X$ does not enjoy the Schur property. We can find a sequence $\left( x_{n}\right) $ in $\mathsf{S}_{X}$ such that $x_{n}\overset{w }{\longrightarrow }0$ in $X$. By hypothesis, $T\left(x_{n}\right) \overset{w }{\longrightarrow }0$ in $X^{\ast }$. However, $T\left( x_{n}\right) \left( x_{n}\right) =1$ for every $n\in \mathbb{N}$, so we reach the contradiction that $X$ does not have the Dunford-Pettis property.
\end{proof}

\begin{corollary}\label{joder1}
Let $X$ be any separable Banach space which has the Dunford-Pettis property but not the Schur property. Then $X$ can be equivalently renormed in such way that it is smooth but its dual map is not sequentially $w $-$w $ continuous at $0$.
\end{corollary}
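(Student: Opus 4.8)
The plan is to reduce the statement to Theorem~\ref{joder}, exploiting the fact that the Dunford-Pettis property and the Schur property are isomorphic invariants while smoothness is a metric notion that can be arranged by renorming; once the space is smooth, its dual map is unique, and Theorem~\ref{joder} then forbids that unique map from being sequentially $w$-$w$ continuous at $0$.

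First I would invoke the classical renorming theorem (see \cite{DGZ}): every separable Banach space admits an equivalent G\^ateaux smooth norm. Let $|||\cdot|||$ be such a norm and set $Z=(X,|||\cdot|||)$. Being G\^ateaux differentiable at each nonzero point means precisely that every point of $\mathsf{S}_Z$ is a smooth point, so $Z$ is smooth. Because $|||\cdot|||$ is equivalent to the original norm, $Z$ and $X$ induce the same norm and weak topologies; consequently $Z$ inherits the Dunford-Pettis property and still fails the Schur property, both of these depending only on those topologies.

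Next, since $Z$ is smooth, part (d) of the Proposition above tells us that $Z$ admits a \emph{unique} dual map, namely the duality map $\mathsf{J}_Z$ of the new norm. I would then apply Theorem~\ref{joder} to $Z$: assertion (1) fails there, so assertion (3) fails as well; but $Z$ does satisfy the Dunford-Pettis property, so the failure of (3) can only be due to the nonexistence of a dual map on $Z$ that is sequentially $w$-$w$ continuous at $0$. Since $\mathsf{J}_Z$ is the only dual map on $Z$, it cannot be sequentially $w$-$w$ continuous at $0$, which is exactly the assertion for the renormed space $Z$.

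The only ingredient external to this paper is the existence of an equivalent smooth norm on a separable space; every other step is pure bookkeeping with Theorem~\ref{joder} and the uniqueness of the dual map in the smooth case. The point that deserves a line of justification---and which I regard as the only genuinely checkable step---is the persistence of the Dunford-Pettis and Schur properties under the passage to $|||\cdot|||$, but this is immediate from the invariance of the norm and weak topologies under an equivalent renorming.
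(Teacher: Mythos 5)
Your proposal is correct and follows essentially the same route as the paper: renorm the separable space to be smooth, observe that the Dunford--Pettis and Schur properties survive the equivalent renorming, and then read off from Theorem~\ref{joder} that the (unique) dual map of the smooth norm cannot be sequentially $w$-$w$ continuous at $0$. The paper's proof is just a terser version of the same argument, citing Mazur for the smooth renorming.
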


\begin{proof}
Notice that $X$ can be equivalently renormed to be smooth since it is separable (see \cite{Ma}). Theorem \ref{joder} assures that the dual map will not be continuous.
\end{proof}

\begin{example}
\textit{There exists smooth Banach spaces whose dual map fails to be $w$-$w$ continuous at $0$.} Indeed, in virtue of Corollary \ref{joder1} it only suffices to consider $c_0$ with an equivalent smooth norm.
\end{example}

Now we prove a sufficient (but not necessary) condition for a Banach space with the Dunford-Pettis property to have the Schur property. Again dual maps will play a crucial role. The main idea behind this is to version the following result (see \cite[Theorem 3]{D}).

\begin{theorem}[\cite{D}]\label{l2}
Let $X$ be a Banach space. If $X$ has the Dunford-Pettis property, then either $X$ contains a copy of $\ell _{\mathtt{1}}$ or $X^*$ has the Schur property.
\end{theorem}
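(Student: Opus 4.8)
The plan is to prove the statement in a contrapositive spirit: assuming that $X$ has the Dunford-Pettis property and that $X$ does \emph{not} contain an isomorphic copy of $\ell_1$, I would show that $X^*$ enjoys the Schur property. The two external ingredients are Rosenthal's $\ell_1$ theorem (every bounded sequence in a Banach space either has a weakly Cauchy subsequence or has a subsequence equivalent to the unit vector basis of $\ell_1$) and the weakly Cauchy reformulation of the Dunford-Pettis property recorded in Section 2, namely that whenever $(u_n)$ is weakly Cauchy in $X$ and $(u_n^*)$ is weakly null in $X^*$ one has $u_n^*(u_n)\to 0$.

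So let $(x_n^*)_n$ be a weakly null sequence in $X^*$; I must show $\|x_n^*\|\to 0$. Suppose not. Then, after passing to a subsequence, there is $\delta>0$ with $\|x_n^*\|\geq\delta$ for all $n$. By the definition of the norm I can choose vectors $x_n\in\mathsf{B}_X$ with $x_n^*(x_n)=\abs{x_n^*(x_n)}\geq\delta/2$ (absorbing the phase of the scalar $x_n^*(x_n)$ into $x_n$, which keeps the sequence inside $\mathsf{B}_X$). This produces a bounded sequence $(x_n)$ in $X$ together with the lower bound $x_n^*(x_n)\geq\delta/2$.

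Now apply Rosenthal's theorem to $(x_n)$. Because $X$ contains no copy of $\ell_1$, the alternative yielding an $\ell_1$-subsequence is excluded, so $(x_n)$ has a weakly Cauchy subsequence; relabel it again as $(x_n)$. At this point $(x_n)$ is weakly Cauchy in $X$ while $(x_n^*)$ is still weakly null in $X^*$, so the Dunford-Pettis property in its weakly Cauchy form forces $x_n^*(x_n)\to 0$. This contradicts $x_n^*(x_n)\geq\delta/2>0$, and the contradiction shows $\|x_n^*\|\to 0$, that is, $X^*$ has the Schur property.

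The step I expect to be the crux is invoking Rosenthal's $\ell_1$ theorem cleanly: it is what converts the hypothesis ``no copy of $\ell_1$'' into the usable conclusion that the witnessing sequence $(x_n)$ admits a weakly Cauchy subsequence, and it is the only place where that hypothesis enters. Everything else is bookkeeping, provided one is slightly careful in the complex case that the phase adjustment is carried out \emph{before} extracting the weakly Cauchy subsequence, so that Rosenthal's theorem is applied directly to the already phase-normalized vectors rather than to $(x_n)$ and then re-phased afterwards.
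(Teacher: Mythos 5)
The paper does not actually prove this statement---it is quoted from Diestel's survey \cite{D} as known background---so there is no in-paper proof to compare against line by line. Your argument is correct, and it is precisely the mirror image of the argument the paper gives for its companion Theorem \ref{l1}: there the authors start from a weakly null normalized sequence $\left(x_n\right)$ in $X$, push it into $X^*$ via a dual map $T$, apply Rosenthal's dichotomy to $\left(T\left(x_n\right)\right)$, and play the weakly Cauchy case off against $T\left(x_n\right)\left(x_n\right)=1$ using the DPP; you instead start from a weakly null sequence $\left(x_n^*\right)$ in $X^*$ bounded away from zero in norm, produce near-norming vectors $x_n\in\mathsf{B}_X$, and apply Rosenthal's theorem in $X$ itself, so no dual map is needed and the hypothesis ``$X$ contains no copy of $\ell_1$'' enters exactly where it should. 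Both arguments rest on the same two pillars: Rosenthal's $\ell_1$-theorem and the weakly Cauchy reformulation of the Dunford--Pettis property recorded in Section 2 of the paper. Your care in performing the phase normalization \emph{before} extracting the weakly Cauchy subsequence is the right precaution in the complex case, and the proof is complete as written.
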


The use of this result has been very important to study the connection between weakly compact and compact operators on spaces whose duals are Schur spaces. In particular, in spaces of weighted analytic functions (see \cite{M}).

By means of dual maps we will be able to provide a version of Theorem \ref{l2} with a very short proof. This result we are about to state and prove has been already proved in \cite[Theorem 3.16]{Ghenciu}. However, our proof is slightly different as we strongly rely on Rosenthal's $\ell _{\mathtt{1}}$-Theorem (see \cite{R}).

\begin{theorem}\label{l1}
Let $X$ be a Banach space. If $X$ has the Dunford-Pettis property, then either $X^{\ast }$ contains a copy of $\ell _{\mathtt{1}}$ or $X$ has the Schur property.
\end{theorem}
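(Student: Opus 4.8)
The plan is to establish the statement in contrapositive form: assuming that $X$ has the Dunford-Pettis property and that $X^*$ contains no isomorphic copy of $\ell_1$, I would derive that $X$ has the Schur property. The engine of the argument is a dual map, which converts an arbitrary normalized weakly null sequence into a companion sequence in $X^*$ whose pairing with the original is pinned to the constant value $1$.

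First I would fix any dual map $T:X\to X^*$; its existence is guaranteed by part (f) of the Proposition, since all that is required is to send each nonzero $x$ to a norm-attaining functional of norm $\left\|x\right\|$. Suppose, toward a contradiction, that $X$ fails the Schur property. Then there is a sequence $\left(x_n\right)_{n\in\mathbb{N}}$ in the unit sphere $\mathsf{S}_X$ with $x_n\overset{w}{\longrightarrow}0$, so that $\left\|x_n\right\|=1$ for every $n$.

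Next I would apply Rosenthal's $\ell_1$-Theorem to the sequence $\left(T\left(x_n\right)\right)_{n\in\mathbb{N}}$, which is bounded because $\left\|T\left(x_n\right)\right\|=\left\|x_n\right\|=1$. Since $X^*$ contains no copy of $\ell_1$, Rosenthal's theorem rules out an $\ell_1$-subsequence, and therefore $\left(T\left(x_n\right)\right)_{n\in\mathbb{N}}$ must admit a weakly Cauchy subsequence $\left(T\left(x_{n_k}\right)\right)_{k\in\mathbb{N}}$. The corresponding subsequence $\left(x_{n_k}\right)_{k\in\mathbb{N}}$ remains weakly null in $X$.

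Finally I would invoke the Dunford-Pettis property in the weakly-Cauchy form recorded in the Background section: for a weakly null sequence in $X$ paired against a weakly Cauchy sequence in $X^*$, the scalar sequence of pairings tends to $0$. Applied to $\left(x_{n_k}\right)_{k\in\mathbb{N}}$ and $\left(T\left(x_{n_k}\right)\right)_{k\in\mathbb{N}}$, this forces $T\left(x_{n_k}\right)\left(x_{n_k}\right)\to 0$. But by the defining identity of a dual map, $T\left(x_{n_k}\right)\left(x_{n_k}\right)=\left\|x_{n_k}\right\|^2=1$ for every $k$, a contradiction. The only delicate point is ensuring that the weakly-Cauchy version of the DPP is legitimately applicable here, rather than the literal weakly-null definition; once that is granted, the remainder follows immediately from Rosenthal's dichotomy and the algebraic identity built into the dual map. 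This is precisely where the proof gains its brevity compared to the route through Theorem \ref{l2}.
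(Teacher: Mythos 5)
Your proposal is correct and follows essentially the same route as the paper's own proof: a dual map applied to a normalized weakly null sequence, Rosenthal's $\ell_1$-Theorem to extract a weakly Cauchy subsequence in $X^*$ (the alternative branch giving a copy of $\ell_1$), and the weakly-Cauchy form of the Dunford--Pettis property contradicting the identity $T\left(x_{n_k}\right)\left(x_{n_k}\right)=1$. The ``delicate point'' you flag is already settled by the remark in the Background section that weak convergence of $\left(x_n^*\right)$ may be replaced by the weakly Cauchy condition in the definition of the DPP.
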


\begin{proof}
Suppose that $X$ fails to have the Schur property and consider a sequence $\left( x_{n}\right)_{n\in\mathbb{N}}$ in $\mathsf{S}_{X}$ such that $x_{n}\overset{w }{\longrightarrow }0$ in $X$. Let $T:X\to X^*$ be any dual map on $X$. Since $\left( T\left( x_{n}\right) \right)_{n\in\mathbb{N}} $ is bounded, we can apply Rosenthal's $\ell _{\mathtt{1}}$-Theorem (see \cite{R}) to find a subsequence $\left( x_{n_{k}}\right)_{k\in \mathbb{N}} $ such that $\left( T\left( x_{n_k}\right) \right)_{k\in\mathbb{N}} $ is either $w $-Cauchy in $X^{\ast }$ or equivalent to the $\ell_1$-basis. We will distinguish between these two possibilities:
\begin{itemize}
\item $\left( T\left( x_{n_k}\right) \right)_{k\in\mathbb{N}} $ is equivalent to the $\ell_1$-basis. In this case we inmediately have that $X^*$ has a copy of $\ell_1$ and so we are done.
\item $\left( T\left( x_{n_k}\right) \right)_{k\in\mathbb{N}} $ is $w $-Cauchy in $X^{\ast }$. In this situation, we have by hypothesis that $\left(T\left(x_{n_k}\right)\left(x_{n_k}\right)\right)_{k\in\mathbb{N}}$ converges to $0$, which contradicts that $T\left( x_{n}\right) \left( x_{n}\right) =\left\| x_{n}\right\|^{2}=1$ for every $n\in \mathbb{N}$.
\end{itemize}
\end{proof}

We obtain a well-known result as a corollary:
\begin{corollary}
If $X$ is an infinite dimensional Banach space and $X$ has the Dunford-Pettis property, then $\ell_1$ is a closed subspace of $X^*$.
\end{corollary}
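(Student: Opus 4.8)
The plan is to read the result straight off Theorem \ref{l1}, whose dichotomy already disposes of one of the two cases. Applying Theorem \ref{l1} to our space $X$, which has the Dunford--Pettis property, puts us in exactly one of two situations: either $X^*$ contains a copy of $\ell_1$, in which case there is nothing left to prove, or $X$ has the Schur property. Thus the entire content of the corollary sits in the second alternative, and the task reduces to showing that an infinite-dimensional Banach space with the Schur property has $\ell_1$ inside its dual.

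First I would show that an infinite-dimensional Schur space $X$ must itself contain $\ell_1$, using Rosenthal's $\ell_1$-Theorem, already invoked in the proof of Theorem \ref{l1}. I would pick a normalized basic sequence $(x_n)$ in $X$, which exists in any infinite-dimensional Banach space. If $X$ contained no copy of $\ell_1$, then by Rosenthal's theorem $(x_n)$ would admit a weakly Cauchy subsequence $(x_{n_k})$; the consecutive differences $x_{n_{2k}}-x_{n_{2k+1}}$ would then be weakly null, hence norm null by the Schur property. This contradicts the fact that the biorthogonal functionals of a basic sequence are uniformly bounded, which forces those differences to stay bounded away from $0$. Therefore $\ell_1$ embeds into $X$.

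Next I would transfer this copy of $\ell_1$ from $X$ to $X^*$. Since $\ell_1$ embeds as a closed subspace of $X$, the adjoint of the inclusion is a quotient map of $X^*$ onto $\ell_1^*=\ell_\infty$. Now $\ell_\infty$ is universal for separable spaces, so it contains a copy of $\ell_1$; and because $\ell_1$ has the lifting property with respect to quotient maps, one lifts the images of its basis vectors to bounded preimages in $X^*$ and extends linearly. The resulting operator is a lift of the embedding $\ell_1\hookrightarrow\ell_\infty$, so composing with the quotient map recovers that (bounded-below) embedding, which keeps the lift bounded below and hence an isomorphism onto its range. Combining the two cases yields $\ell_1\hookrightarrow X^*$ in all cases.

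The \textbf{main obstacle} I expect is precisely this last step, deducing $\ell_1\hookrightarrow X^*$ from $\ell_1\hookrightarrow X$: the first two steps are standard, but one must check that the lift of the embedding remains bounded below rather than degenerating, which is exactly what the factorization through the quotient map guarantees. Should one prefer to avoid the lifting argument, an alternative is to cite the classical theorem that $\ell_1\hookrightarrow X$ forces $L_1[0,1]\hookrightarrow X^*$, together with the elementary embedding $\ell_1\hookrightarrow L_1[0,1]$.
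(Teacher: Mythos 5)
Your argument is correct and follows the same skeleton as the paper's proof: the dichotomy of Theorem \ref{l1} reduces everything to the Schur case, Rosenthal's theorem then plants a copy of $\ell_1$ inside the infinite-dimensional Schur space $X$, and finally that copy is transferred to $X^*$. The only divergence is in the last step, where the paper simply cites the classical fact that $\ell_1\hookrightarrow X$ forces $\ell_1\hookrightarrow X^*$ (via \cite{D3}), whereas you prove it directly by dualizing the embedding to a quotient map $X^*\twoheadrightarrow\ell_\infty$ and lifting the embedding $\ell_1\hookrightarrow\ell_\infty$ through it using the projectivity of $\ell_1$ --- a correct and pleasantly elementary substitute for the citation.
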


\begin{proof} By Theorem \ref{l1}, if $\ell_1$ is not contained in $X^*$, then $X$ has the Schur property, so by Rosenthal's $\ell_1-$Theorem, $\ell_1$ is contained in $X$. It is well-known that in this case, $\ell_1$ is also contained in $X^*$ (see \cite{D3}, p.211), a contradiction.
\end{proof}

\bibliographystyle{amsplain}

\end{document}